\theoremstyle{plain}
\newtheorem{theorem}{Theorem}[section]
\newtheorem{lemma}[theorem]{Lemma}
\newtheorem{corollary}[theorem]{Corollary}
\newtheorem{proposition}[theorem]{Proposition}
\newtheorem{fact}[theorem]{Fact}
\newtheorem{observation}[theorem]{Observation}
\theoremstyle{definition}
\newtheorem{definition}[theorem]{Definition}
\theoremstyle{remark}
\newtheorem{remark}[theorem]{Remark}
\newcommand{\fm}{\mathfrak{m}}
\newcommand{\fn}{\mathfrak{n}}
\newcommand{\fq}{\mathfrak{q}}
\newcommand{\fp}{\mathfrak{p}}
\newcommand{\FF}{\mathbb{F}}
\newcommand{\Nn}{\mathbb{N}}
\newcommand{\coker}{\operatorname{coker}}
\def\ext{\operatorname{Ext}}
\def\hm{\operatorname{Hom}}
\def\tr{\operatorname{Tr}}
\def\im{\operatorname{im}}
\def\pd{\operatorname{pd}}
\def\rank{\operatorname{rank}}
\def\length{\operatorname{length}}
\def\ann{\operatorname{ann}}
\def\gl{\operatorname{GL}}
\def\soc{\operatorname{soc}}
\numberwithin{equation}{section}
\begin{document}

\title[A Description of Totally Reflexive Modules]{A Description of Totally Reflexive Modules for a Class of \\
non-Gorenstein Rings}

\author[Denise A. Rangel Tracy]{Denise A. Rangel Tracy}

\address{ Department of Mathematics,  Syracuse University, Syracuse, NY USA}

\email{detracy@syr.edu}



\date{\today}

\keywords{totally reflexive, non-Gorenstein rings, presentation matrix, isomorphism class}

\subjclass[2010]{Primary 13D99: 18G99}

\begin{abstract}
We consider local non-Gorenstein rings of the form $(S_i,\fn_i)=k[X, Y_1, \ldots ,Y_i]/\left(X^2, (Y_1, \ldots, Y_i)^2\right), $ where $i\geq 2.$ We show that every totally reflexive $S_i$-module has a presentation matrix of the form $I x + \sum_{j=1}^i B_j y_j, $ where $I$ is the identity matrix and $B_j$ is an square matrix with entries in the residue field, $k$. From there, we prove that there exists a bijection between the set of isomorphism classes of totally reflexive modules (without projective summands) over $S_i$ which are minimal generated by $n$ elements and the set  of $i$-tuples of  $n \times n$ matrices with entries in $k$ modulo a certain equivalence relation. 

 \end{abstract}

\maketitle
\thispagestyle{empty}

\section{Introduction and Preliminaries}
For a two-sided Noetherian ring $R,$ a finitely generated $R$-module $M$ is called \emph{totally reflexive} if $\ext_R^i(M,R) =0, $ $\ext_R^i(\hm_R(M,R), R) =0 $  for all $i>0,$ and the biduality map, $\delta: M \rightarrow \hm_R(\hm_R(M,R), R)$ is an isomorphism. These modules were originally called modules of Gorenstein dimension zero \cite{AusBr} and are also known as finitely generated Gorenstein projective modules \cite{EdJen}. The term totally reflexive was introduced in 2002 \cite{AvMar}.

In this paper, we assume all rings to be commutative Noetherian and local. Projective modules, and equivalently free modules, are obviously totally reflexive. We call a nonzero totally reflexive module \emph{trivial} if it is a projective module.  A complex is called \emph {acyclic} if its homology is zero.  A \emph{totally acyclic complex} is an acyclic complex $\mathbb{A}$ whose \emph{dual} $\hm_R(\mathbb{A}, R)$ is also acyclic. Recall that over a local ring $(R, \fm)$ a complex $(\FF, \partial)$ of free modules is said to be \emph{minimal} if $\im \partial_i^{\FF} \subseteq \fm\FF_{i-1}$ for all $i.$

 A module $T$ being totally reflexive is equivalent to being a syzygy in a totally acyclic complex of finitely generated free modules. We call this totally acyclic complex a \emph{complete resolution} of $T.$  The ranks of the free modules in a complete (also free) resolution are called \emph{Betti numbers.}

Important examples of totally reflexive modules include all maximal Cohen-Macaulay modules over Gorenstein rings. The representation theory of maximal Cohen-Macaulay modules has been well documented, for example see \cite{LWbook}, \cite{Yosbook}.  Therefore, the class of rings we will be considering are non-Gorenstein. There is a special type of ring element called an exact zerodivisor, see section \ref{Describ} for details, which plays an important role in the study of totally reflexive modules over non-Gorenstein rings. The ideals generated by such elements are totally reflexive modules as well. 

 It is known from  \cite{sing} that if a non-Gorenstein local ring $(R,\fm,k)$ admits one  nontrivial totally reflexive module, then there exist infinitely many non-isomorphic indecomposable totally reflexive modules. In addition, if $R$ contains an exact zerodivisor and $k$ is algebraically closed, then for every $n\in \Nn$ there exists a family $\{M_n^{\lambda}\}_{\lambda\in k}$ of indecomposable, pairwise non-isomorphic totally reflexive $R$-modules, \cite[Theorem 1.4]{BT}. This is referred to as having wild totally reflexive representation type. 

In \cite{BT} and \cite{Holm}, infinite families of non-isomorphic indecomposable totally reflexive module are constructed, both of which arise from exact zerodivisors. This is done for  arbitrary local commutative rings. However, there exist examples of totally reflexive modules which cannot be obtained from these constructions. For example, the rings that are the focus of this paper.
For the following class of rings we are able to give a complete description of the isomorphism classes totally reflexive modules via presentation matrices. From this description, we can clearly see that these rings do in fact have wild totally reflexive representation type.

\begin{definition} Let $k$ be a field of characteristic zero and for $i \geq 2 $ set 
$$(S_i, \fn_i)=k[X, Y_1, \ldots ,Y_i]/\left(X^2, (Y_1, \ldots, Y_i)^2\right).$$
Set  $x=X+(X^2, (Y_1, \ldots, Y_i)^2) $   and $ y_j=Y_j +(X^2, (Y_1, \ldots, Y_i)^2) $ for $j=1, \ldots i.$ 

\end{definition}

 These can be easily identified as non-Gorenstein rings by computing the rank of the socle of $S_i$. That is, $\rank_k(\soc(S_i))=\rank_k(\ann(\fn_i))=\rank_k((xy_1,xy_2, \dots, xy_i))=i\geq 2.$  Let $\mbox{M}
_n(k)$ be the set of all $n \times n $ matrices with entries in $k$.  Within the class of rings $S_i$, we are able to prove the following.\\ 

\textbf{Theorem \ref{isomclass}. }\emph{For $i \geq 2, $ let $\mathcal{T\!\!  R}_n(S_i)$ be the set of isomorphism classes of totally reflexive modules  over $S_i,$ without projective summands, which is minimal generated by $n$ elements. 
 Then there exists a bijection, 
$$\vartheta: \mathcal{T\!\!  R}_n(S_i) \leftrightarrow \mbox{M}_n(k)^i/ \sim \, ,$$
where $(A_1, \dots, A_i) \sim (B_1, \dots, B_i)$ if there exists a  $P \in\mbox{GL}_n(k)$ such that \\
$P(A_1, \dots, A_i)P^{-1}:=(PA_1P^{-1}, \dots,  PA_i P^{-1})=(B_1, \dots, B_i).$}\\

This paper concludes with the proof of this theorem. If $F_1 \xrightarrow{\varphi} F_0 \to M\to 0$ is a minimal free presentation of a module $M,$ then any matrix representing the differential $\varphi$ is called a \emph{presentation matrix.} Theorem \ref{isomclass} follows from the fact, which will be proven later, that every totally reflexive $S_i$-module has a presentation matrix of the form 
$$I_n x + \sum_{j=1}^i B_j y_j, $$ where $B_j$ is an $n \times n$ matrix with entries in $k$, see Theorem \ref{presmatrixform}.
 

\section{Presentation Matrices of Totally Reflexive \\Modules over $S_i$}
  Presentation matrices are not unique. However, over commutative Noetherian local rings finitely generated isomorphic modules have equivalent minimal presentation matrices, see \cite[Theorem 4.3]{equivmat}. That is, if $\Phi$ and $\Psi$ are minimal presentation matrices of isomorphic modules, then there exist invertible matrices, $A$, $B$ such that $\Phi=A\Psi B^{-1}.$ In other words, minimal presentation matrices are unique up to elementary row and column operations. 

We have chosen the rings $S_i$ since they are a type of non-Gorenstein rings for which we know many facts about their totally reflexive modules, \cite{Acyclic},\cite {Yos}. Specifically,  they are non-Gorenstein local rings with the cube of the maximal ideal equal to zero. In addition, a result due to Yoshino classifies the structure of rings that admit nontrivial totally reflexive modules.

\begin{theorem} \cite[Theorem 3.1]{Yos} \label{YosThm}
Let $(R,\fm)$ be a commutative local Artinian non-Gorenstein ring with $\fm^3=0\neq\fm^2$ and let $e=\rank(\fm/\fm^2).$ If there exists a nontrivial totally reflexive $R$-module $T,$ then the following conditions hold:
\begin{enumerate}[(a)]
\item $\ann\fm=\fm^2$
\item $\length (\fm^2)=e-1,$ in particular, $\length(R)=2e.$ \label{YosThmlength}
\item  $\length (T)=ne,$ were $n$ is the minimal number of generators of $T,$ 
\item $R$ has a natural structure of homogeneous graded ring with \\$R=R_0\oplus R_1 \oplus R_2.$
\item The module $T$ has a natural structure of a  graded $R$-module. If $T$ has no free summands, then $T$ has only two graded pieces;\\ $T=T_0\oplus T_1$
\item \label{constbetti}$T$ has a minimal free graded resolution of the form \label{YosThmReso}
$$\cdots \rightarrow R^n(-n) \rightarrow \dots \xrightarrow{d_1} R^n(-2) \xrightarrow{d_2} R^n(-1) \xrightarrow{d_1}R^n \rightarrow T \rightarrow 0.$$
\end{enumerate}
\end{theorem}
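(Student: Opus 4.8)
The plan is to extract all six conclusions from a single object attached to $T$: a \emph{minimal complete resolution} $(\FF,\partial)$, with $T$ realized as a syzygy in it, say $T\cong\im(\FF_0\to\FF_{-1})$. After splitting off free summands we may assume $T$ has none, so every $\partial_j$ has entries in $\fm$. Two elementary remarks get used constantly. Since the entries of $\partial_j$ lie in $\fm$ and $\fm^3=0$, we have $\partial_j(\fm^2\FF_j)\subseteq\fm^3\FF_{j-1}=0$, so, writing $\Omega_j:=\ker\partial_j=\im\partial_{j+1}\subseteq\fm\FF_j$, we get $\fm^2\FF_j\subseteq\Omega_j\subseteq\fm\FF_j$, whence $\fm^2\Omega_j=0$ and in particular $\fm^2T=0$. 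Moreover $\fm\Omega_j\subseteq\fm^2\FF_j$, so $\Omega_j/\fm^2\FF_j$ is a $k$-vector space, and from $0\to\Omega_j\to\FF_j\to\Omega_{j-1}\to0$ one reads off, with $b_j=\rank\FF_j$, $e=\rank(\fm/\fm^2)$ and $d_j=\dim_k(\Omega_j/\fm^2\FF_j)$,
$$\length(\Omega_j)=b_j\,\length(\fm^2)+d_j,\qquad \length(\Omega_{j-1})=b_j(1+e)-d_j,$$
where $d_j$ is at most $b_{j+1}$ (the minimal number of generators of $\Omega_j$).

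The heart of the argument is part (a), $\ann\fm=\fm^2$, together with \emph{linearity} of the resolution, i.e.\ $d_j=b_{j+1}$ for all $j$ (equivalently, each $\partial_j$ can be chosen with entries in the degree-one part of $R$). For (a) I would argue by contradiction. If $s\in\ann\fm\setminus\fm^2$ then $s$ is a minimal generator of $\fm$, $s^2=0$, and $sR\cong k$; since $s\fm=0$ the matrix $s\partial_j$ vanishes, so multiplication by $s$ is a chain endomorphism of $\FF$ whose image $s\FF_j\cong k^{b_j}$ lies in $\Omega_j$ and meets $\fm^2\FF_j$ trivially. Hence $b_j\le d_j\le b_{j+1}$; the same argument applied to the (also exact) dual complex $\hm_R(\FF,R)$ shows $(b_j)$ is non-increasing as well, hence constant, $b_j\equiv n$, and therefore $d_j=n$. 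Then $\Omega_j/\fm^2\FF_j$ equals the image of $s\FF_j$ in that quotient, which (after a suitable choice of bases) forces $\partial_{j+1}=s\,I_n+Q_{j+1}$ with $Q_{j+1}$ having entries in $\fm^2$; but such a $\partial_{j+1}$ annihilates $\fm\FF_{j+1}$, so $\im\partial_{j+1}$ has length at most $n$, while $\ker\partial_j=\Omega_j$ has length $n\length(\fm^2)+n>n$ — contradicting exactness. Linearity is established in the same spirit: a persistently ``quadratic'' generator of some $\Omega_j$ is ruled out by pairing against the exact dual complex together with the fact that $R$ is non-Gorenstein. This paragraph — the part that truly exploits the exactness of $\hm_R(\FF,R)$ and the non-Gorenstein hypothesis — is where I expect the main obstacle to lie.

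Granting (a) and linearity, the identities of the first paragraph collapse to the recurrence
$$b_{j+1}=e\,b_j-\length(\fm^2)\,b_{j-1}$$
on the two-sided sequence of positive integers $(b_j)_{j\in\ZZ}$. Positivity forces the roots of $x^2-ex+\length(\fm^2)$ to be real; an elementary analysis of the recurrence, using that $\length(\fm^2)=\length(\soc R)\ge2$ because $R$ is non-Gorenstein, then forces $1$ to be a root, i.e.\ $\length(\fm^2)=e-1$ — this is (b), and hence $\length(R)=1+e+(e-1)=2e$ — and forces the companion root $e-1$, which is $>1$, to occur with coefficient $0$, so $b_j\equiv n$. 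Part (c) follows, since every syzygy (in particular $T$) has length $\length(\Omega_{j-1})=b_{j-1}\length(\fm^2)+d_{j-1}=n(e-1)+n=ne$.

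It remains to record the structural statements, which are now formal. For (d): an Artinian local ring with $\fm^3=0$ and $\ann\fm=\fm^2$ has $R/\fm$, $\fm/\fm^2$, $\fm^2$ as its only successive quotients, and choosing $k$-linear splittings exhibits $R$ as its own associated graded ring $R_0\oplus R_1\oplus R_2$ with $R_0=k$, $R_1=\fm/\fm^2$, $R_2=\fm^2$. For (e): $\fm^2T=0$ and $T$ is generated in a single degree, so $T=T_0\oplus T_1$ with $T_1=\fm T$ and no other graded pieces. Finally (f) is precisely the assertion that a minimal complete resolution of such a $T$ — being linear, with every free module of rank $n$ — has the displayed form $\cdots\to R^n(-2)\xrightarrow{d_2}R^n(-1)\xrightarrow{d_1}R^n\to T\to0$.
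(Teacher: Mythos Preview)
The paper does not prove this theorem at all: it is quoted verbatim as \cite[Theorem~3.1]{Yos} and used as a black box, with no argument supplied. So there is no ``paper's own proof'' to compare against; any comparison would have to be with Yoshino's original article, not with the present paper.

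That said, a few remarks on your proposal itself. The skeleton is the right one --- work with a minimal complete resolution, exploit $\fm^3=0$ to trap each syzygy between $\fm^2\FF_j$ and $\fm\FF_j$, and derive a linear recurrence on the Betti numbers --- and your proof of (a) is essentially correct. But you explicitly flag the linearity step (``established in the same spirit'') as a gap, and it really is one: showing that every minimal generator of $\Omega_j$ can be chosen in $\fm\FF_j\setminus\fm^2\FF_j$ is exactly where the non-Gorenstein hypothesis and the exactness of the dual complex do real work, and it does not follow from the contradiction argument you gave for (a). Your treatment of (d) is also too quick: choosing $k$-linear splittings of $R\supset\fm\supset\fm^2$ gives a $k$-vector-space decomposition, not a ring isomorphism with $\operatorname{gr}_\fm(R)$; one needs an additional argument (in Yoshino's setting, lifting to a regular presentation and checking that the defining ideal is homogeneous) to upgrade this to a graded ring structure. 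Finally, the recurrence analysis deserves more care: the claim that positivity of $(b_j)_{j\in\ZZ}$ forces $1$ to be a root of $x^2-ex+\length(\fm^2)$ is correct but not immediate, and hinges on the two-sidedness of the sequence together with $\length(\fm^2)\ge 2$.
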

There are a few of particularly important facts to note from this theorem stemming from  part (\ref{constbetti}). First,  a minimal complete resolution of a totally reflexive module over such a ring would have constant Betti numbers. Hence, any minimal presentation matrix of a totally reflexive module must be square. Moreover, every entry in this matrix would be of a linear form. That is, completely contained in $T_0$. This can be seen by noting the grading in the free resolution of $T$ decreases by one at each step. Hence, $T$ has a linear
resolution.

In order to continue with our analysis, we need to consider another class of rings. A surjection of local rings $\rho: (P, \fq, k) \to (R, \fm, k)$ is called a \emph{deformation} of $R$ if $\ker \rho$ is generated by a $P$-regular sequence. A deformation is called \emph{embedded} if $\ker \rho \subset \fq^2.$ This situation is often referred to as $P$ being an embedded deformation of $R$.  Finding an embedded deformation of $S_i$ will enable us to completely describe the presentation matrices of totally reflexive $S_i$-modules.

\begin{observation} The ring $(Q_i, \fp_i)=k[X, Y_1, \ldots ,Y_i]/\left( (Y_1, \ldots, Y_i)^2\right)$ is an embedded deformation of $S_i, $ and since $x^2$ is a  regular element of $Q_i,$ we have 
$S_i\cong Q_i/(x^2).$ \end{observation}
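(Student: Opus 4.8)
The plan is to verify the three assertions in turn: that the canonical surjection $\rho\colon Q_i \to S_i$ is a deformation, that it is embedded, and that $x^2$ is a $Q_i$-regular element; the isomorphism $S_i\cong Q_i/(x^2)$ then drops out.

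First I would set up the surjection. Both rings are quotients of $k[X,Y_1,\dots,Y_i]$, and since $(X^2,(Y_1,\dots,Y_i)^2)\supseteq((Y_1,\dots,Y_i)^2)$ there is a canonical surjection of local rings $\rho\colon Q_i\twoheadrightarrow S_i$ (it carries $\fp_i$ onto $\fn_i$), with $\ker\rho = (x^2)Q_i$, where $x = X+((Y_1,\dots,Y_i)^2)$. This kernel is principal, so to see that $\rho$ is a deformation it is enough to check that the single generator $x^2$ is a nonzerodivisor on $Q_i$; and the deformation is automatically embedded, because $x\in\fp_i$ forces $x^2 = x\cdot x\in\fp_i^2$, hence $(x^2)Q_i\subseteq\fp_i^2$.

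The substantive point is that $x^2$ is $Q_i$-regular, which I would obtain by exhibiting $Q_i$ as a free module over the subring $k[X]$. Because $(Y_1,\dots,Y_i)^2 = 0$ in $Q_i$, every product $Y_jY_\ell$ vanishes, so $Q_i$ is spanned over $k[X]$ by $1,y_1,\dots,y_i$; the explicit monomial basis of $Q_i$ (all monomials $X^a Y_1^{b_1}\cdots Y_i^{b_i}$ with $b_1+\cdots+b_i\le 1$) shows this spanning set is in fact a $k[X]$-basis, so $Q_i\cong k[X]^{\,i+1}$ as a $k[X]$-module. Under this identification multiplication by $x$ is $k[X]$-linear and preserves each of the $i+1$ summands, acting there as multiplication by $X$ (it sends $1\mapsto x$ and $y_j\mapsto xy_j$, each lying in its own summand). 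Since $k[X]$ is an integral domain, multiplication by $X$, and hence by $X^2$, is injective on $k[X]$, therefore injective on the free module $Q_i$. Thus $x^2$ is a nonzerodivisor, $(x^2)$ is generated by a $Q_i$-regular sequence of length one, and $\rho$ is an embedded deformation.

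Finally, the isomorphism $S_i\cong Q_i/(x^2)$ is the first isomorphism theorem applied to $\rho$, together with the identification $Q_i/(x^2) = k[X,Y_1,\dots,Y_i]/\big((Y_1,\dots,Y_i)^2 + (X^2)\big) = S_i$. I do not expect a genuine obstacle; the only step meriting a line of care is the claim that $1,y_1,\dots,y_i$ is a $k[X]$-basis of $Q_i$ rather than merely a generating set, and that is settled by writing out the monomial basis of $Q_i$ explicitly.
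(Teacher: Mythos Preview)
The paper states this as an observation without supplying any proof; it treats the claims as self-evident. Your verification is correct and considerably more thorough than anything the paper offers: in particular, exhibiting $Q_i$ as a free $k[X]$-module with basis $1,y_1,\dots,y_i$ is a clean way to see that $x^2$ is a nonzerodivisor, and the remaining points (principal kernel, containment in $\fp_i^2$, the isomorphism $S_i\cong Q_i/(x^2)$) are routine consequences that you handle correctly.
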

 For the reminder of this paper, we will reserve $(Q_i, \fp_i)$ to denote the ring defined above.

\begin{fact}\label{Gdimfact}  \cite[4.2]{Yos} \label{pd1} If a ring $P$ is an embedded deformation of a non-Gorenstein ring $R$ that admits  nontrivial totally reflexive module  $T$, then $\pd_P(T)=1.$ \end{fact}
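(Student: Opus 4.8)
The plan is to reduce the statement to the finiteness of $\pd_P(T)$ and then pin down the value with Auslander--Buchsbaum. We may assume $T$ has no projective $R$-summands: if $T\cong T'\oplus R^t$ with $T'$ summand-free and $t>0$, then since $\ker(P\to R)$ is a nonzero $P$-regular sequence we have $\pd_P(R)=1$, so $\pd_P(T)=\max\{\pd_P(T'),1\}$, and it suffices to treat $T'$. Now, $R$ is Artinian, so $\operatorname{depth}_P(T)=\operatorname{depth}_R(T)=0$; and since $\ker(P\to R)$ is generated by a $P$-regular sequence of some length $c$, we have $\operatorname{depth}(P)=\operatorname{depth}(R)+c=c$. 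Hence, once $\pd_P(T)<\infty$ is known, Auslander--Buchsbaum gives $\pd_P(T)=\operatorname{depth}(P)-\operatorname{depth}_P(T)=c$; moreover $T\neq0$ has the nonzero annihilator $\ker(P\to R)$ over the local ring $P$, so $T$ is not free and $\pd_P(T)\geq1$. In the situation needed here, $\ker(Q_i\to S_i)=(x^2)$ is principal, so $c=1$ and $\pd_P(T)=1$ exactly.

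The heart of the matter is therefore $\pd_P(T)<\infty$, and the plan is to produce an explicit length-one $P$-free resolution. By Yoshino's structure theorem (Theorem \ref{YosThm}) $T$ has a minimal $R$-free resolution that is linear with constant Betti number $n$; in particular its presentation matrix $d_1$ is an $n\times n$ matrix of linear forms, and because $T$ is totally reflexive this presentation sits inside a totally acyclic complex whose $R$-dual is again acyclic. Fix a lift $\widetilde{d_1}\colon P^n\to P^n$ of $d_1$, entry by entry. Writing $R=P/(\underline f)$, the relation $d_1d_2=0$ lifts to $\widetilde{d_1}\,\widetilde{d_2}\equiv0\pmod{(\underline f)}$, and since $\underline f\subset\fq^2$ while all entries are forms of low degree, a degree count forces $\widetilde{d_1}\,\widetilde{d_2}$ to be $(\underline f)$ times a \emph{scalar} matrix; the totally acyclic structure — self-duality of the complete resolution, together with the rigidity of $\operatorname{Theorem}~\ref{YosThm}$ applied to $T$ \emph{and} to its transpose — should force that scalar to be invertible, so that after a change of basis one obtains a genuine matrix factorization $\widetilde{d_1}\,\widetilde{d_2}=\widetilde{d_2}\,\widetilde{d_1}=f\cdot I_n$ over $P$ (with $\underline f=(x^2)$ a single element in the case we need). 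Granting this, $f$ is a $P$-regular element, so $\widetilde{d_2}\,\widetilde{d_1}=fI_n$ forces $\widetilde{d_1}$ to be injective, while $\widetilde{d_1}\,\widetilde{d_2}=fI_n$ forces $f\cdot\coker(\widetilde{d_1})=0$; thus $\coker(\widetilde{d_1})$ is an $R$-module, and base change along $P\to R$ identifies it with $\coker_R(d_1)=T$. Hence $0\to P^n\xrightarrow{\widetilde{d_1}}P^n\to T\to0$ is exact and $\pd_P(T)\leq1$.

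I expect the main obstacle to be exactly the middle step: showing the lifted presentation matrix fits into an honest matrix factorization over $P$, equivalently that the correction scalar produced by lifting $d_1d_2=0$ is a unit. Over a regular local ring this is Eisenbud's matrix factorization theorem, but here $P$ (for instance $Q_i$) is not regular, so one must genuinely use that $T$ is totally reflexive over $R$ rather than merely maximal Cohen--Macaulay — concretely, that both the complete resolution of $T$ and its $R$-dual are acyclic, and that the transpose of $T$ again has the structure given by Theorem \ref{YosThm}. Everything else — the reduction to $\pd_P(T)<\infty$, the Auslander--Buchsbaum computation, and extracting $0\to P^n\to P^n\to T\to0$ from the matrix factorization — is routine once this structural input is in place. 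For the ring $Q_i$ one can alternatively prove injectivity of $\widetilde{d_1}$ by localizing at the unique minimal prime $(y_1,\dots,y_i)$, where the image of $x$ becomes a unit, but that shortcut is essentially equivalent to Theorem \ref{presmatrixform} and so cannot be invoked in the logical order of this paper.
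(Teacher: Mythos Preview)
The paper does not prove this Fact; it is quoted from Yoshino \cite[4.2]{Yos} and used as a black box, so there is no argument in the paper to compare your proposal against.

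On the proposal itself: the Auslander--Buchsbaum reduction is correct and shows that once $\pd_P(T)<\infty$ one obtains $\pd_P(T)=c$, the length of the defining regular sequence --- so the stated value $1$ is only literally correct when $c=1$, as in the application $Q_i\to S_i$. The step you yourself flag as the ``main obstacle'' is genuinely the entire content of the result, and your sketch does not close it. A degree count only tells you that the entries of $\widetilde{d_1}\,\widetilde{d_2}$ lie in $\fq^2$; it does not force them into $(\underline f)$, which is a strictly smaller ideal, nor does anything force the correction to be a \emph{scalar} matrix. As you note, Eisenbud's matrix-factorization theorem is unavailable since $P$ (e.g.\ $Q_i$) need not be regular, and the appeal to ``self-duality and rigidity'' is a hope, not an argument. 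A route that does work, and is close to Yoshino's in the $\fm^3=0$ setting, goes through Gorenstein dimension: a standard change-of-rings formula gives $\Gdim_P(T)=\Gdim_R(T)+c=c<\infty$, and one then shows separately that $P$ admits no nontrivial totally reflexive modules, which forces $\Gdim_P(T)=\pd_P(T)$. It is in this last step that the structural hypotheses on $R$ --- non-Gorenstein, $\fm^3=0$, admitting a nontrivial totally reflexive module --- and the conclusions of Theorem~\ref{YosThm} are actually used.
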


Therefore, for any totally reflexive $S_i$-module $T$ 
$$0 \xrightarrow{} Q_i^n\xrightarrow{\partial} Q_i^n\xrightarrow{}  T  \xrightarrow{}0 $$
is a $Q_i$-free resolution of $T.$  


 \section{Description of  $\mathcal{T\!\!  R}_n(S_i)$}\label{Describ}
 The goal of this section is to consider a free presentation of a totally reflexive $S_i$-module $T,$
$$ S_i^{n} \xrightarrow{\partial} S_i^{n} \rightarrow T \rightarrow 0$$
and to investigate the possibilities for $\partial,$ as well as the other differentials in a complete resolution of $T$ and be able to give it a standard form.  

We begin by first investigating one of simplest examples of nontrivial totally reflexive modules, namely those given by exact zerodivisors.  For a commutative ring $R$, a non-unit $s \in R$ is said to be an \emph {exact zerodivisor} if there exists $t \in R$ such that $\ann (s)= tR$  and $\ann (t) = sR.$ If $R$ is local, then $t$ is unique up to units, and we call $s, t$ an \emph {exact pair of  zerodivisors}. This is equivalent to the existence of a totally acyclic complex 
$$ \cdots \rightarrow R \xrightarrow{t}R \xrightarrow{s}R \xrightarrow{t}R \xrightarrow{s}R \rightarrow \cdots.$$
Therefore, the modules $sR$ and $tR$ are totally reflexive.\\

For the rings $S_i,$ we know the form of the exact zerodivisors. 

\begin{proposition} Assume $i\geq 2. $ For $a,b_j \in k,$ the ring element $ax +\sum_{j=1}^i b_j y_j $ is an exact zerodivisor in $S_i$ if and only if $a\neq 0$.  In addition, all cyclic totally reflexive $S_i$-modules are obtained from exact zerodivisors.

\end{proposition}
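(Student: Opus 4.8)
The plan is to treat the two assertions in turn; both come down to computations inside $S_i$, for which I fix the $k$-basis $\{1,\,x,\,y_1,\dots,y_i,\,xy_1,\dots,xy_i\}$ (here $x^2=0$ and $y_jy_\ell=0$, so $\fn_i^2=(xy_1,\dots,xy_i)$ and $\fn_i^3=0$).

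First assertion. Write $s=ax+\sum_j b_jy_j$. If $a\neq 0$, then replacing $s$ by the unit multiple $a^{-1}s$ changes neither $\ann_{S_i}(s)$ nor the ideal $sS_i$, so I may assume $a=1$; set $u=\sum_j b_jy_j$ and put $t=x-u$. Then $st=x^2-u^2=0$, and expanding a general element $r=r_0+r_1x+\sum_j s_jy_j+\sum_j t_jxy_j$ against $s$ and against $t$ in the fixed basis shows
$$\ann_{S_i}(s)=k(x-u)\oplus\textstyle\bigoplus_j k\,xy_j=tS_i,\qquad \ann_{S_i}(t)=k(x+u)\oplus\textstyle\bigoplus_j k\,xy_j=sS_i ,$$
so $s,t$ is an exact pair of zerodivisors. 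Conversely, if $a=0$ and $s\neq 0$, then $s=\sum_j b_jy_j$ with some $b_j\neq 0$, and the same kind of expansion gives $\ann_{S_i}(s)=(y_1,\dots,y_i)$, an ideal whose minimal number of generators is $i\geq 2$; in particular it is not principal, so $s$ is not an exact zerodivisor. (The case $s=0$ is degenerate --- its cyclic module is $0$ --- and is excluded.)

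Second assertion. Up to the trivial modules $S_i$ and $0$, a cyclic totally reflexive $S_i$-module is $T\cong S_i/I$ with $0\neq I\subsetneq\fn_i$. Since $S_i$ is Artinian, non-Gorenstein, with $\fn_i^3=0\neq\fn_i^2$ and admits the nontrivial totally reflexive module $T$, Theorem \ref{YosThm} applies, and part (\ref{YosThmReso}) together with the remarks following it (constant Betti numbers and linearity of the resolution) forces the minimal free resolution of $T$ to be
$$\cdots\longrightarrow S_i\xrightarrow{\ \cdot t'\ }S_i\xrightarrow{\ \cdot s\ }S_i\longrightarrow T\longrightarrow 0$$
with $s,t'$ linear forms. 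Hence $I=(s)$ with $s=ax+\sum_j b_jy_j\neq 0$, and exactness at the middle $S_i$ gives $\ann_{S_i}(s)=\im(\cdot t')=(t')$, a principal ideal. By the annihilator computation above, for a nonzero linear form $s$ the ideal $\ann_{S_i}(s)$ is principal precisely when $a\neq 0$; so $a\neq 0$, whence $s$ is an exact zerodivisor and $T=S_i/(s)$ is the cyclic module attached to the exact pair $s,t$ (indeed $T\cong tS_i$).

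The one step that needs a word of justification is the appeal to Theorem \ref{YosThm}: one checks that a nontrivial cyclic totally reflexive $S_i$-module does meet that theorem's hypotheses and that its conclusions --- square (here rank-one) and linear presentation --- then apply to $T$. After that, everything reduces to the two routine annihilator computations in the monomial basis of $S_i$, and that bookkeeping is where essentially all of the (elementary) work goes.
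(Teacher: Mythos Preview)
Your treatment of the first assertion is essentially the paper's own argument: normalize to $a=1$, pair $s=x+\sum b_jy_j$ with $t=x-\sum b_jy_j$, check $st=0$, and compute $\ann(s)$ directly in the monomial basis to see it equals $tS_i$ (and symmetrically for $\ann(t)$); in the $a=0$ case both you and the paper observe that $\ann(s)=(y_1,\dots,y_i)$ is not principal. Your basis bookkeeping is a bit more explicit, but the ideas coincide.

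Where you go beyond the paper is the second assertion. The paper's proof, as written, simply does not address the claim that every cyclic totally reflexive $S_i$-module comes from an exact zerodivisor; it stops after establishing the exact-pair characterization. Your argument fills this gap correctly: invoking Theorem~\ref{YosThm}(\ref{YosThmReso}) for the nontrivial cyclic module $T$ forces a rank-one linear minimal resolution, so $T\cong S_i/(s)$ with $s$ a nonzero linear form and $\ann(s)=(t')$ principal; your annihilator computation then rules out $a=0$, whence $s$ is an exact zerodivisor. This is exactly the right appeal --- the structural input from Yoshino's theorem is what makes the reduction to the linear, principal case legitimate --- and it is worth keeping, since the paper's own proof leaves that sentence of the proposition unjustified.
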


\begin{proof} First suppose that $a=0, $ so that $s=\sum_{j=1}^i b_j y_j . $ Then we have that $\ann(s)= (y_1, y_2, \ldots, y_i). $ Since this is not principal, $s$ cannot be a exact zerodivisor.

 Assuming $a\neq 0,$  let $s=x +\sum_{j=1}^i c_j y_j $ and $t=x -\sum_{j=1}^i c_j y_j, $  where $c_j=\frac{b_j}{a}.$ Since $ts=0,$ we have $tR\subseteq \ann(s)$. Now note that the ring $S_i$ is also standardly graded, and that $\{x, y_j\}_{j=1,...,i}$ is a basis of its linear elements. 

Let $f \in \ann (s)\subset S_i,$ so $f= \alpha x +\sum_{j=1}^i \beta_j y_j +\sum_{j=1}^i \gamma_j x y_j,$ for some $\alpha,  \beta_j, \gamma_j \in k. $ Since $f$ annihilates $s$ we have  $0=fs=\alpha x \sum_{j=1}^i c_j y_j+x\sum_{j=1}^i \beta_j y_j, $ since $y_hy_j=0$ for all $h,j=1, \ldots i.$ Since the set  $\{y_j\}_{j=1,...,i}$ is part of a basis of the linear elements in $S_i,$ we have that  $\beta_j=-\alpha c_j$ for all $j=1, \ldots i.$ In addition, one can write $f$ as multiple of $t.$ That is, 
$$f= \alpha x-\sum_{j=1}^i \alpha c_j y_j +\sum_{j=1}^i \gamma_j x y_j= (\alpha + \sum_{j=1}^i \gamma_j  y_j) t \in tR.$$ Therefore, $\ann(s)=tR. $ Showing that $\ann(t)=sR$ is similar and thus omitted. Therefore, $s$ and $t$ form an exact pair of zerodivisors. \end{proof}


The form of the exact zerodivisors in $S_i$ foreshadows our form for presentation matrices of larger totally reflexive $S_i$-modules.  

\begin{theorem} \label{presmatrixform} If $T$ is a totally reflexive $S_i$-module which is minimally generated by $n$  elements, then there exists a presentation matrix of $T$ of the form $x I_n + \sum_{j=1}^i B_j y_j$ where $B_j$ is an $n\times n$ matrix with entries in $k$ for $j=1, \dots i.$  

\end{theorem}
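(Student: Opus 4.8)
The plan is to combine the structural information already available from Yoshino's theorem with a descent argument through the embedded deformation $Q_i$. By Theorem~\ref{YosThm} and the discussion following it, a minimal free presentation of $T$ over $S_i$ has the form $S_i^n \xrightarrow{\partial} S_i^n \to T \to 0$ with $\partial$ an $n\times n$ matrix all of whose entries are linear forms; expanding each entry in the $k$-basis $x,y_1,\dots,y_i$ of the degree-one part of $S_i$, we may write $\partial = xA_0 + \sum_{j=1}^i y_jA_j$ for matrices $A_0,A_1,\dots,A_i\in\mbox{M}_n(k)$. It then suffices to show $A_0\in\mbox{GL}_n(k)$: once that is known, $\partial A_0^{-1}$ is again a presentation matrix of $T$ (right multiplication by an invertible $k$-matrix is an automorphism of $S_i^n$), and $\partial A_0^{-1} = xI_n + \sum_{j=1}^i y_j(A_jA_0^{-1})$, so one sets $B_j := A_jA_0^{-1}$.

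To prove $A_0$ is invertible I would pass to $Q_i$. By Fact~\ref{Gdimfact} and the remark following it, $T$ has a $Q_i$-free resolution $0\to Q_i^n\xrightarrow{\widetilde\partial}Q_i^n\to T\to 0$, and $\widetilde\partial$ may be taken to lift $\partial$, so $\widetilde\partial = xA_0 + \sum_{j=1}^i y_jA_j$ now read over $Q_i$; in particular $\widetilde\partial$ is injective. Suppose $\det A_0 = 0$. Since $k$ is a field there is a nonzero $v\in k^n$ with $A_0v = 0$, and then $y_1v$ is a nonzero element of $Q_i^n$ with
$$\widetilde\partial(y_1v) \;=\; xy_1(A_0v) + \sum_{j=1}^i y_jy_1(A_jv) \;=\; 0,$$
since $A_0v = 0$ and $y_jy_1 = 0$ in $Q_i$ for every $j$. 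This contradicts injectivity of $\widetilde\partial$, hence $A_0\in\mbox{GL}_n(k)$ and the theorem follows.

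The essential point — and the only step beyond bookkeeping — is the passage to $Q_i$: over $S_i$ alone a square matrix of linear forms need not have its $x$-coefficient invertible, and it is exactly the demand that the lifted differential remain injective over the larger, non-Artinian ring $Q_i$ that forces this. The facts I have quoted from earlier in the paper are what make this rigorous, and the two places I would double-check are that the $Q_i$-resolution of $T$ has both free modules of rank precisely $n$ (the constant Betti number statement of Theorem~\ref{YosThm}, together with the observation that $T$ needs $n$ generators over $Q_i$ as well since $\ker(Q_i\to S_i)\subseteq\fp_i$) and that the differential can indeed be chosen as a lift of $\partial$ with linear entries intact (immediate, since a linear form of $S_i$ lifts to the same linear form of $Q_i$). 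One could instead argue via determinants — injectivity of $\widetilde\partial$ makes $\det\widetilde\partial$ a nonzerodivisor, and $\det\widetilde\partial = x^n\det(A_0) + \sum_j y_jg_j(x)$ because $y_jy_{j'}=0$, while the nonzerodivisors of $Q_i = k[x]\oplus\bigoplus_j k[x]y_j$ are precisely the elements with nonzero $k[x]$-component — but the displayed one-line computation is cleaner.
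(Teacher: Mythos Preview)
Your proof is correct and follows the same essential strategy as the paper's: lift the presentation to the embedded deformation $Q_i$ and exploit injectivity of the differential there. Your execution is in fact tidier than the paper's---where the paper argues column-by-column that some entry must carry an $x$-term and then appeals to row and column operations to reach the normal form, you go straight to showing $A_0\in\gl_n(k)$ via the kernel vector $y_1v$, which is the cleaner route to the same conclusion.
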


\begin{proof}
 Let 
$$ \mathbf{T} =\left[\begin{array} {c c c}
a_{11} & \dots & a_{1n}\\
\vdots & \ddots & \vdots \\
a_{n 1} & \dots & a_{nn}\\
\end{array} \right]$$

\noindent be a minimal presentation matrix of a totally reflexive $S_i$-module $T$. By Lemma \ref{YosThm} part (\ref{constbetti}) we can assume that for all $1\leq g, h \leq n$ we have $a_{gh}= b_0x+\sum_{j=1}^i b_j y_j$ where $b_j \in k.$ From  the fact given by \ref{Gdimfact} the  pd$_{Q_i} (T) = 1,$  and a minimal $Q$-free resolution of $T$ is given by 
$$ 0 \to Q^n \xrightarrow{\partial} Q^n \to T \to 0.$$
Thus for $\mathbf{c}=\left[\begin{array} {ccc} c_1& \dots & c_n \end{array} \right]^T$ in  $ Q_i^n,$ \:if $\mathbf{Tc}=0,$ then $\mathbf{c}= 0.$ That is, if for all $1 \leq g\leq n$
$$c_1a_{g1} + c_2a_{g2} + \dots + c_n a_{gn} =0, 
$$
then $c_w=0,$ for all $1 \leq w\leq n.$ If $a_{gh}= \sum_{j=1}^i b_j y_j$ where $b_j \in k,$ then it would have the relations $a_{gh}\sum_{j=1}^i b_j y_j=0.$

Therefore, there must exists at least one value of $g$ for every $h$ such that $a_{gh}= \lambda x + \sum_{j=1}^i b_j y_j$ with $\lambda \neq 0.$  By the use of row and column operations, one can obtain a presentation matrix of $T$ in which there is exactly one entry in each row and in each column that contains an $x$ term, and without lost of generality, choose $a_{gg}$ to be that term. Therefore, there is a presentation matrix of $T$ of the form $x I_n + \sum_{j=1}^i B_j y_j$ where $B_j$  is  an $n\times n$ matrix with entries in $k$.
\end{proof}

\begin{lemma}\label{periodic} 
If $T$ is a totally reflexive $S_i$-module, then a complete resolution of $T$ is periodic of period 1 or 2. 

\end{lemma}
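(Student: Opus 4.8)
The plan is to exhibit an explicit $2$-periodic totally acyclic complex of free $S_i$-modules in which $T$ occurs as a syzygy, and then to invoke uniqueness of minimal complete resolutions. After discarding any free direct summands of $T$ (these contribute only a split-exact summand to a complete resolution and do not affect periodicity), Theorem~\ref{presmatrixform} lets us take the presentation matrix of $T$ to be $\partial := x I_n + \sum_{j=1}^i B_j y_j$ with each $B_j \in \mathrm{M}_n(k)$. Put $\psi := x I_n - \sum_{j=1}^i B_j y_j$. Because $x^2 = 0$ and $y_h y_j = 0$ in $S_i$ for all $h,j$, a direct expansion gives
$$\partial\psi = \psi\partial = x^2 I_n - \Bigl(\sum_{j=1}^i B_j y_j\Bigr)^{\!2} = 0.$$
Hence the doubly infinite complex
$$\mathbb{A}:\qquad \cdots \xrightarrow{\partial} S_i^n \xrightarrow{\psi} S_i^n \xrightarrow{\partial} S_i^n \xrightarrow{\psi} S_i^n \xrightarrow{\partial} \cdots$$
is a complex; it is $2$-periodic by construction, and it is minimal because every entry of $\partial$ and of $\psi$ is a $k$-linear combination of $x,y_1,\dots,y_i$ and so lies in $\fn_i$. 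Granting that $\mathbb{A}$ is totally acyclic, $\mathbb{A}$ is a complete resolution of $T$, since $T = \coker\partial$ occurs in it as a syzygy; being minimal, $\mathbb{A}$ is, up to isomorphism, the minimal complete resolution of $T$, and the lemma follows.

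To prove that $\mathbb{A}$ is totally acyclic I would pass to the embedded deformation $Q_i$, for which $S_i = Q_i/(x^2)$ and $x^2$ is a nonzerodivisor of $Q_i$. Lifting the formulas verbatim --- they make sense over $Q_i$, where again $y_h y_j = 0$ --- produces matrices $\Delta := x I_n + \sum_{j=1}^i B_j y_j$ and $\Psi := x I_n - \sum_{j=1}^i B_j y_j$ over $Q_i$ with $\Delta\Psi = \Psi\Delta = x^2 I_n$; that is, $(\Delta,\Psi)$ is a matrix factorization of the nonzerodivisor $x^2$. By Fact~\ref{Gdimfact}, $\pd_{Q_i} T = 1$, and $0 \to Q_i^n \xrightarrow{\Delta} Q_i^n \to T \to 0$ is a minimal $Q_i$-free resolution of $T$; in particular $\Delta$ is injective. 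Now the standard matrix-factorization argument applies: if $\bar{v} \in \ker\partial \subseteq S_i^n$, lift it to $v \in Q_i^n$; then $\Delta v \in x^2 Q_i^n$, say $\Delta v = x^2 w$, so $x^2 v = \Psi\Delta v = x^2\Psi w$, and since $x^2$ is a nonzerodivisor of $Q_i$ we conclude $v = \Psi w$, whence $\bar{v} = \psi\bar{w} \in \im\psi$. Combined with $\im\psi \subseteq \ker\partial$ this gives $\ker\partial = \im\psi$; interchanging the roles of $\Delta$ and $\Psi$ gives $\ker\psi = \im\partial$, so $\mathbb{A}$ is acyclic. Finally, $\hm_{S_i}(\mathbb{A},S_i)$ is the analogous complex built from $\partial^{T}$ and $\psi^{T}$, and $(\Delta^{T},\Psi^{T})$ is again a matrix factorization of $x^2$, so the same argument shows the dual of $\mathbb{A}$ is acyclic; hence $\mathbb{A}$ is totally acyclic.

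I expect the one genuinely substantive step to be the exactness of $\mathbb{A}$ --- the matrix-factorization step --- which forces one to work over $Q_i$ rather than $S_i$, to use that $x^2$ is an honest nonzerodivisor there, and to know that $\Delta$ is injective over $Q_i$ so that the cancellation $x^2 v = x^2\Psi w \Rightarrow v = \Psi w$ is legitimate. The remaining ingredients --- the explicit choice of $\psi$, minimality, the bookkeeping with transposes for total acyclicity, and the reduction to modules without free summands --- are routine. It is also worth noting that the period equals $1$ precisely when $\partial$ and $\psi$ are equivalent presentation matrices (for instance when all $B_j = 0$), and equals $2$ otherwise.
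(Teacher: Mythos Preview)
Your proof is correct and follows essentially the same route as the paper: both use Theorem~\ref{presmatrixform} to write the presentation matrix as $xI_n+\sum_j B_j y_j$, pair it with $xI_n-\sum_j B_j y_j$ to obtain a matrix factorization of $x^2$ over the embedded deformation $Q_i$, and then pass to $S_i=Q_i/(x^2)$ to get the $2$-periodic complete resolution. The only difference is cosmetic---the paper invokes Avramov's construction in \cite[5.1.2]{infres} for the acyclicity, whereas you write out the standard matrix-factorization exactness argument directly (and are in fact more explicit than the paper about verifying total acyclicity via the transposed factorization).
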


\begin{proof}

 Since $x^2$ annihilates $T$ as an $S_i$-module and $S_i\cong Q_i/(x^2),$ we can use the construction given by Avramov in  \cite[5.1.2]{infres}.  Since $x^2 \in Q_i$ is a regular element, multiplication by it is null-homotopic between the free $Q_i$-modules in a $Q_i$-resolution of $S_i$-modules.  More precisely, we have the following commutative diagram.
$$
\xymatrixrowsep{1.9pc}\xymatrixcolsep{2.3pc}\xymatrix{
&0 \ar@{->}[r]& \ar@{->}[d]_{x^2}Q_i^n\ar@{->}[r]^{\partial} & \ar@{->}[d]^{x^2}Q_i^n \ar@{->}[ld]_{\sigma}\ar@{->}[r] & T \ar@{->}[d]^{x^2} \ar@{->}[r]&0 \\
&0 \ar@{->}[r]& Q_i^n\ar@{->}[r]^{\partial} & Q_i^n \ar@{->}[r] & T \ar@{->}[r]&0 \\
}
$$

In particular, if we consider $\partial $ and $ \sigma $ as matrices with entries in $Q_i$, then they are a matrix factorization of $x^2.$ That is, $\partial \sigma = x^2I_n = \sigma\partial. $   Note that this holds regardless of  choice of a presentation matrix of $T.$ By the previous theorem, $T$ has a presentation matrix of the form $x I_n + \sum_{j=1}^i B_j y_j.$ Let the matrix $ \partial=x I_n + \sum_{j=1}^i B_j y_j. $ In order for $\partial \sigma = x^2I_n = \sigma\partial$ to hold, we must have  $ \sigma =x I_n - \sum_{j=1}^i B_j y_j.$ 

Define an infinite complex of free $S$-modules 
$$ \cdots\xrightarrow{S_i\otimes_{Q_i} \partial} S_i \otimes_{Q_i} Q_i^n\xrightarrow {S_i\otimes_{Q_i} \sigma} S_i \otimes_{Q_i} Q_i^n \xrightarrow{S_i\otimes_{Q_i} \partial} S_i \otimes_{Q_i} Q_i^n \xrightarrow{} \cdots,$$
which is isomorphic to 
$$\mathbb{A}: \qquad \cdots\xrightarrow{\overline{\partial}} S_i^n\xrightarrow {\overline{\sigma}} S_i^n \xrightarrow{\overline{\partial}} S_i^n \xrightarrow{}\cdots,$$
where $\overline{\partial}$ and $\overline{\sigma}$ are the matrices $\partial$ and $\sigma,$ but with entries views as $S_i$ elements.  We know that the complex $\mathbb{A}$ is minimal since we assumed  $T$ to be indecomposable, see \cite[5.1.2]{infres}.  
 \end{proof}


\begin{remark} 
The only indecomposable totally reflexive module whose differentials in a complete resolution are constant is $S_i/(x)$.

\end {remark}

With the use of Lemma \ref{periodic}, we can see what that differentials in a (minimal) complete resolution of a totally reflexive module have a certain form. 
\begin{corollary}
Using the language of Theorem \ref{presmatrixform}, if $T$ is a totally reflexive $S_i$-module, then 
$$ \cdots  \to S_i^n \xrightarrow{x I_n + \sum_{j=1}^i B_j y_j}S_i^n \xrightarrow{x I_n - \sum_{j=1}^i B_j y_j}S_i^n \xrightarrow{x I_n + \sum_{j=1}^i B_j y_j} S_i^n \to \cdots$$
is a complete resolution of $T$, where $T\cong \coker \left(x I_n + \sum_{j=1}^i B_j y_j\right)$.

\end{corollary}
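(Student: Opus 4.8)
The plan is to assemble this corollary directly from the two preceding results, so the proof is essentially a matter of bookkeeping. By Theorem \ref{presmatrixform}, the totally reflexive module $T$ has a minimal presentation matrix $\partial = xI_n + \sum_{j=1}^i B_j y_j$ with $B_j \in \mathrm{M}_n(k)$; this already gives $T \cong \coker(xI_n + \sum_{j=1}^i B_j y_j)$ and identifies the first differential. By Lemma \ref{periodic} (and its proof), lifting $\partial$ to $Q_i$ and using the embedded deformation $S_i \cong Q_i/(x^2)$, the complementary map in the matrix factorization of $x^2$ is forced to be $\sigma = xI_n - \sum_{j=1}^i B_j y_j$, since $\partial\sigma = \sigma\partial = x^2 I_n$ over $Q_i$ determines $\sigma$ uniquely (the cross terms $\sum B_j y_j$ must cancel and the $y_h y_j$ terms vanish in $Q_i$). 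Reducing mod $x^2$ then produces the doubly infinite complex $\mathbb{A}$ with alternating differentials $\overline{\partial}$ and $\overline{\sigma}$, which is exactly the complex displayed in the corollary.

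Concretely, I would write: ``By Theorem \ref{presmatrixform}, $T$ admits a presentation matrix $\partial = xI_n + \sum_{j=1}^i B_j y_j$, so $T \cong \coker(xI_n + \sum_{j=1}^i B_j y_j)$. By Lemma \ref{periodic} and its proof, the matrices $\partial$ and $\sigma = xI_n - \sum_{j=1}^i B_j y_j$, viewed over $Q_i$, form a matrix factorization of $x^2$, and reducing modulo $x^2$ yields the complex $\mathbb{A}$ above, which is a (minimal) complete resolution of $T$.'' The only verification needed is that this alternating complex is genuinely totally acyclic — but that is precisely what Lemma \ref{periodic} establishes via Avramov's construction in \cite[5.1.2]{infres}, so it may be cited rather than reproven. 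One should also note that when $i \geq 1$ the matrix $\sum_{j=1}^i B_j y_j$ is typically nonzero, so the resolution genuinely alternates between $\overline{\partial}$ and $\overline{\sigma}$; the degenerate case $\sum B_j y_j = 0$ (forcing period $1$) corresponds to $T \cong S_i/(x)$ as noted in the Remark, and there $\overline{\partial} = \overline{\sigma}$, so the displayed complex is still correct.

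The main (and only real) obstacle is making sure the sign and uniqueness claim for $\sigma$ is airtight: one must check that $\partial\sigma = x^2 I_n$ in $Q_i$ forces $\sigma$ to have the stated form, using that $y_h y_j = 0$ in $Q_i$ for all $h,j$ and that $\{x, y_1, \dots, y_i\}$ is part of a $k$-basis of the linear forms of $Q_i$. Expanding $(xI_n + \sum_j B_j y_j)(xI_n + \sum_j C_j y_j) = x^2 I_n + x\sum_j (B_j + C_j) y_j + \sum_{h,j} B_h C_j y_h y_j$ and using $y_h y_j = 0$ reduces the equation to $\sum_j (B_j + C_j) y_j = 0$, hence $C_j = -B_j$ for every $j$ by linear independence. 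This computation is already implicit in the proof of Lemma \ref{periodic}, so in the write-up I would simply invoke that lemma. The corollary therefore follows with no new ideas, merely by transcribing the output of Theorem \ref{presmatrixform} and Lemma \ref{periodic}.
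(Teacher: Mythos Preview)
Your proposal is correct and matches the paper's approach exactly: the corollary is stated without a separate proof in the paper, following immediately from Theorem \ref{presmatrixform} and the explicit identification of $\sigma = xI_n - \sum_{j=1}^i B_j y_j$ carried out in the proof of Lemma \ref{periodic}. Your write-up, invoking those two results and Avramov's construction for total acyclicity, is precisely what the paper intends.
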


We are now able to prove our main result.

\begin{theorem}  \label{isomclass} 
For $i \geq 2, $ let $\mathcal{T\!\!  R}_n(S_i)$ be the set of isomorphism classes of totally reflexive modules  over $S_i,$ without projective summands, which is minimal generated by $n$ elements. 
 Then there exists a bijection, 
$$\vartheta: \mathcal{T\!\!  R}_n(S_i) \leftrightarrow \mbox{M}_n(k)^i/ \sim \, ,$$
where $(A_1, \dots, A_i) \sim (B_1, \dots, B_i)$ if there exists a  $P \in\mbox{GL}_n(k)$ such that \\
$P(A_1, \dots, A_i)P^{-1}:=(PA_1P^{-1}, \dots,  PA_i P^{-1})=(B_1, \dots, B_i).$

\end{theorem}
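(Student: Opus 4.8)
The plan is to build the map $\vartheta$ using Theorem~\ref{presmatrixform}: given a totally reflexive module $T$ without projective summands, minimally generated by $n$ elements, choose a presentation matrix of the normalized form $x I_n + \sum_{j=1}^i B_j y_j$ and set $\vartheta([T]) = [(B_1,\dots,B_i)]$. The bulk of the work is showing this is well-defined and bijective, and the natural organizing principle is the theorem cited in the opening of Section~2: two minimal presentation matrices of isomorphic modules differ by $\Phi = A\Psi B^{-1}$ for invertible matrices $A, B$ over $S_i$. So first I would prove well-definedness: if $x I_n + \sum B_j y_j$ and $x I_n + \sum B_j' y_j$ present isomorphic modules, I must produce $P \in \mathrm{GL}_n(k)$ with $PB_jP^{-1} = B_j'$ for all $j$. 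Writing $A = A_0 + (\text{higher order}）$ and $B = B_0 + (\text{higher order})$ with $A_0, B_0 \in \mathrm{GL}_n(k)$ and higher-order terms in $\fn_i$, I would substitute into $A(xI_n + \sum B_j y_j) = (xI_n + \sum B_j' y_j)B$ and compare graded pieces. The degree-1 coefficient of $x$ forces $A_0 = B_0 =: P$, and the degree-1 coefficient of each $y_j$ forces $PB_j = B_j' P$, i.e. $B_j' = PB_jP^{-1}$; the degree-2 terms are automatically consistent because $\fn_i^3 = 0$ kills everything and the $y_hy_j = 0$, $x^2 = 0$ relations eliminate cross terms. This is the step I expect to be the main obstacle — not because it is deep, but because one must be careful that higher-order terms in $A$ and $B$ genuinely cannot interfere, and that the normalization in Theorem~\ref{presmatrixform} (exactly one $x$ per row and column, placed on the diagonal) is preserved or can be re-imposed after such an isomorphism.

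Next I would verify injectivity, which is essentially the converse packaged the same way: if $(B_1,\dots,B_i) \sim (B_1',\dots,B_i')$ via $P$, then conjugating the presentation matrix by $P$ (i.e. $P(xI_n + \sum B_jy_j)P^{-1} = xI_n + \sum (PB_jP^{-1})y_j = xI_n + \sum B_j'y_j$) exhibits an isomorphism of cokernels, so $\vartheta$ separates distinct classes. For surjectivity I would take an arbitrary tuple $(B_1,\dots,B_i) \in \mathrm{M}_n(k)^i$, form $\partial := xI_n + \sum_{j=1}^i B_j y_j$ as a matrix over $Q_i$, and check that $T := \coker(\partial)$ is totally reflexive over $S_i$ with no projective summands and minimally generated by $n$ elements. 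The totally reflexive property comes from the matrix factorization already exhibited in the proof of Lemma~\ref{periodic}: setting $\sigma := xI_n - \sum B_j y_j$ over $Q_i$ gives $\partial\sigma = \sigma\partial = x^2 I_n$, and since $x^2$ is a $Q_i$-regular element with $S_i = Q_i/(x^2)$, the standard matrix-factorization construction (Avramov \cite[5.1.2]{infres}) produces a totally acyclic complex over $S_i$ with $T$ as a syzygy. Minimality of the presentation is clear since every entry of $\partial$ lies in $\fn_i$ (and $T \ne 0$ because $\partial$ is not surjective over $S_i$), so $T$ is minimally generated by exactly $n$ elements; and $T$ has no projective summand because a projective (hence free) summand would force a unit entry after column/row reduction, contradicting that all entries are in $\fn_i$ — alternatively, one invokes that the complex $\mathbb{A}$ is minimal.

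Finally I would assemble these into the bijection statement, being explicit that the inverse of $\vartheta$ sends $[(B_1,\dots,B_i)]$ to $[\coker(xI_n + \sum B_j y_j)]$, and noting that the two composites are the identity by the computations above. One loose end to address carefully: the definition of $\vartheta$ involves a \emph{choice} of normalized presentation matrix, so strictly speaking well-definedness must be checked twice — once for the choice among normalized forms of a fixed module (subsumed in the injectivity/well-definedness argument since any two such are related by an $S_i$-isomorphism of $T$, hence by $A\Psi B^{-1}$) and once across the isomorphism class. Both reduce to the same graded-comparison computation. I would also remark, as the paper promises, that since $\mathrm{M}_n(k)^i/\!\sim$ is the space of $i$-tuples of $n\times n$ matrices up to simultaneous conjugation — a moduli problem of wild type for $i \ge 2$ — this establishes that $S_i$ has wild totally reflexive representation type.
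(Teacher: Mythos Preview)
Your proposal is correct and follows essentially the same approach as the paper's proof: define $\vartheta$ via the normalized presentation matrix from Theorem~\ref{presmatrixform}, and use the equivalence of minimal presentation matrices to establish well-definedness. Your treatment is in fact more thorough than the paper's, which asserts $P,Q \in \gl_n(k)$ directly (rather than reducing from $\gl_n(S_i)$ via the graded comparison you carry out) and dismisses injectivity and surjectivity as ``easy to see'' without supplying the matrix-factorization argument you give for surjectivity.
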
 

\begin{proof}  Define the map $\vartheta: \mathcal{T\!\!  R}_n(S_i) \leftrightarrow \mbox{M}_n(k)^i/ \sim \, $ by $\vartheta(\mathcal {B}) = (B_1, B_2, \ldots, B_i),$ where for any totally reflexive module $T$ in $\mathcal{B},$ $T$ has a presentation matrix of the form $x I_n + \sum_{j=1}^i B_j y_j.$ Let  $B, B' \in\mathcal {B},$ and suppose that they have the following presentation matrices 
$$B:\, x I_n + \sum_{j=1}^i B_j y_j \quad \mbox{ and } \quad B':\, x I_n + \sum_{j=1}^i B^{'}_{j} y_j.$$

Since $B$ and $ B'$ are in the same isomorphism class, there exist matrices $P,Q \in \gl_n(k)$ such that $P( xI_n + \sum_{j=1}^i B_j y_j)Q=x I_n + \sum_{j=1}^i B^{'}_{j} y_j.$ This implies that $PQ=I_n,$ and hence $Q=P^{-1}. $ Therefore, $(B_1, B_2, \ldots, B_i)$ is equivalent to $(B^{'}_1, B^{'}_2, \ldots, B^{'}_i).$ Lastly, it is easy to see that $\vartheta$ is both one-to-one and onto. 
\end{proof}


\end{document}